\newtheorem{thm}{Theorem}[section]
\newtheorem{mylemma}[thm]{Lemma}
\newtheorem{mythm}[thm]{Theorem}
\DeclareMathOperator*{\Tr}{\mathrm{tr}}
\DeclareMathOperator\erf{erf}
\newcommand{\R}{\ensuremath{\mathbb{R}}}
\newcommand{\N}{\ensuremath{\mathbb{N}}}
\newcommand{\norm}[1]{\lVert #1 \rVert}
\newcommand{\E}{\mathbb{E}}
\newcommand{\abs}[1]{\ensuremath{| #1 |}}
\renewcommand{\Pr}{\mathbb{P}}
\newcommand{\T}{\mathsf{T}}
\newcommand{\calE}{\mathcal{E}}
\newcommand{\calS}{\mathcal{S}}
\newcommand{\bmattwo}[4]{\begin{bmatrix} #1 & #2 \\ #3 & #4 \end{bmatrix}}
\newcommand{\cvectwo}[2]{\begin{bmatrix} #1 \\ #2 \end{bmatrix}}
\numberwithin{equation}{section}
\renewcommand{\geq}{\geqslant}
\renewcommand{\leq}{\leqslant}
\newcommand{\rmd}{\mathrm{d}}
\begin{document}

\title{An elementary proof of anti-concentration
for degree two non-negative Gaussian polynomials} 
\author{Stephen Tu and Ross Boczar}

\maketitle

\begin{abstract}
A classic result by \citet{carbery2001anticonc}
states that a polynomial of Gaussian random 
variables exhibits anti-concentration in the following
sense: for any degree $d$ polynomial $f$, one has the estimate $\Pr\{ \abs{f(x)} \leq \varepsilon \cdot \E\abs{f(x)} \} \leq O(1) \cdot d \varepsilon^{1/d}$, where the probability is over
$x$ drawn from an isotropic Gaussian distribution.
In this note, we give an elementary proof of this
result for the special case when $f$ is a degree two non-negative
polynomial.
\end{abstract}

\section{Introduction}

A well-known result by \citet{carbery2001anticonc} states
that low degree polynomials of Gaussian random variables
exhibit anti-concentration:
\begin{mythm}[{\citet[Theorem~8]{carbery2001anticonc}}]
\label{stmt:carbery_and_wright}
There exists a universal positive constant $C$ such that the following holds.
Let $f : \R^n \rightarrow \R$ be any degree $d$ polynomial,
and let $\mu$ denote any log-concave measure on $\R^n$.
For every $\varepsilon > 0$:
\begin{align*}
    \Pr_{x \sim \mu}\left\{ \abs{f(x)} \leq \varepsilon \cdot \E_{x\sim\mu} \abs{f(x)} \right\} \leq C \cdot d \varepsilon^{1/d}.
\end{align*}
\end{mythm}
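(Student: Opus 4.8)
The plan is to use the classical two-step route for this result: prove it in dimension one by elementary real-variable arguments built on the Remez polynomial inequality, and then lift it to $\R^n$ via the Lovász--Simonovits localization lemma. I do not expect to improve on the known proof; the point is to organize the steps.

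\emph{Reductions.} First I would assume $\mu$ is absolutely continuous with respect to Lebesgue measure: if $\supp\mu$ spans only a proper affine subspace $H\subseteq\R^n$, replace $\R^n$ by $H$ and $f$ by its restriction, which is again a polynomial of degree $\le d$ and for which $\mu$ has a log-concave density on $H$. I may also assume $f$ does not vanish $\mu$-almost everywhere, so $\E_\mu\abs{f}>0$ (otherwise the statement is degenerate and is not intended). Since both sides of the inequality are invariant under $f\mapsto cf$ with $c>0$, normalize $\E_\mu\abs{f}=1$; the goal becomes $\Pr_{x\sim\mu}\{\abs{f(x)}\le\varepsilon\}\le Cd\,\varepsilon^{1/d}$.

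\emph{Localization to one dimension.} Suppose for contradiction that $\mu\{\abs{f}\le\varepsilon\}>Cd\,\varepsilon^{1/d}=:\kappa$. Then $\int_{\R^n}(\ind\{\abs{f}\le\varepsilon\}-\kappa)\,d\mu>0$ and also $\int_{\R^n}(\abs{f}-\tfrac12)\,d\mu=\tfrac12>0$. Writing $d\mu=e^{-V}\,dx$ with $V$ convex and applying the two-function form of the Lovász--Simonovits localization lemma to $e^{-V}(\ind\{\abs{f}\le\varepsilon\}-\kappa)$ and $e^{-V}(\abs{f}-\tfrac12)$, one obtains a segment $\{(1-t)a+tb:t\in[0,1]\}$ and a log-concave weight $\psi:[0,1]\to\R_{\ge0}$, $\psi\not\equiv0$ --- here the needle's Jacobian factor $\ell(t)^{n-1}$ and the restriction of $e^{-V}$ to the segment are both log-concave in $t$, hence so is their product --- such that the univariate polynomial $p(t):=f((1-t)a+tb)$ of degree $\le d$ and the probability measure $\nu:=\psi\,dt/\int_0^1\psi$ satisfy
\[
\nu\{t:\abs{p(t)}\le\varepsilon\}>\kappa,\qquad \int_0^1\abs{p}\,d\nu>\tfrac12 .
\]
It therefore suffices to prove the following one-dimensional statement for a universal constant $C$ (which then fixes the $C$ above): for every probability measure $\nu$ with log-concave density on an interval and every univariate polynomial $p$ of degree $\le d$ with $\int\abs{p}\,d\nu\ge\tfrac12$, one has $\nu\{\abs{p}\le\varepsilon\}\le Cd\,\varepsilon^{1/d}$.

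\emph{The one-dimensional estimate and the main obstacle.} Here $E:=\{\abs{p}\le\varepsilon\}$ is a union of at most $2d$ intervals (as $p^2-\varepsilon^2$ has at most $2d$ real roots), and the factor $d$ in the bound will come from summing over these components. On any component $I_0\subseteq E$ one has $\abs{p}\le\varepsilon$, and the engine is the Remez inequality $\sup_J\abs{p}\le(4\abs{J}/\abs{I_0})^d\sup_{I_0}\abs{p}\le(4\abs{J}/\abs{I_0})^d\varepsilon$, valid for every interval $J\supseteq I_0$. When $\nu$ is the uniform measure on an interval $I$ this closes at once: $\int_I\abs{p}\,d\nu\ge\tfrac12$ forces $\sup_I\abs{p}\ge\tfrac12$, so $\abs{E\cap I}\le 8\abs{I}\varepsilon^{1/d}$ and $\nu(E)\le8\varepsilon^{1/d}$. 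The difficulty --- where I expect essentially all of the effort to go --- is a general log-concave density $\psi$, which may be sharply non-uniform (for instance steeply exponential), so $\nu$ cannot simply be replaced by a uniform measure. I would slice $\psi$ by its superlevel sets $\{\psi\ge\maxnorm{\psi}2^{-k}\}$, which are intervals by unimodality, whose complements carry $\nu$-mass $O(2^{-k})$ and on which $\psi$ lies within a factor $2^k$ of uniform; picking the interval of this family that first captures a constant fraction of $\int\abs{p}\,d\nu$, one applies the Remez estimate on it --- crucially using the \emph{actual} supremum of $\abs{p}$ there rather than the lower bound $\tfrac12$, which is what keeps the estimate sharp when $\abs{p}$ has a large spike --- and separately bounds the residual tail contribution. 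The one real obstacle is arranging this final argument so that the various constants combine to yield exactly the exponent $1/d$ together with an honestly linear, rather than (say) quadratic, dependence on $d$; the reductions and the localization step itself are routine.
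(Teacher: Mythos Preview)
The paper does not prove \Cref{stmt:carbery_and_wright}; it merely quotes it from \citet{carbery2001anticonc} and explicitly remarks that proving it ``is quite non-trivial.'' The paper's own contribution is \Cref{stmt:special_case}, the special case where $\mu$ is the standard Gaussian, $d=2$, and $f$ is non-negative. So there is no ``paper's proof'' of the statement you were asked about to compare against.

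That said, your proposal is a faithful outline of the standard route to the full Carbery--Wright theorem: reduce to one dimension via Lov\'asz--Simonovits localization, then control the small-ball probability of a univariate degree-$d$ polynomial against a log-concave weight using Remez-type inequalities. Your handling of the localization step (absorbing $e^{-V}$ into the integrand so that the needle weight $\ell^{n-1}e^{-V}$ is log-concave) is correct, and you correctly identify that the substantive difficulty lies entirely in the one-dimensional endgame --- getting exactly $d\,\varepsilon^{1/d}$ rather than a cruder bound. Your level-set slicing sketch is plausible but not yet a proof; that is precisely the part of Carbery--Wright that is delicate, and you would need to actually carry out the bookkeeping to claim a complete argument.

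By contrast, the paper's approach to its special case is entirely different and much more concrete: it writes the non-negative quadratic as a PSD quadratic form in $(1,x)$, bounds the Laplace transform $\E e^{-\lambda f(x)}$ by a Gaussian integral (\Cref{stmt:quadratic_gaussian_mgf}), applies a Chernoff bound to handle the $\Tr(Q_{22})$ part (\Cref{stmt:small_ball_gaussian_quadratic_explicit}), and separately handles the case where the constant term dominates via an explicit one-dimensional Gaussian small-ball calculation (\Cref{stmt:mean_anticoncentration}). This buys an explicit constant $(2e)^{1/2}$ and a self-contained argument, at the cost of no generality whatsoever --- it uses the exact Gaussian MGF and the PSD structure of non-negative quadratics, neither of which survives to higher degree or general log-concave $\mu$. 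Your approach is the one that actually proves the theorem as stated; the paper's approach does not and does not try to.
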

Proving \Cref{stmt:carbery_and_wright} is quite non-trivial.
Recently, \citet{lovett2010elementary} provided a more elementary proof of the anti-concentration property,
but the resulting dependence on $d$ is sub-optimal.

The purpose of this note is to supply a proof of \Cref{stmt:carbery_and_wright}, with reasonably sharp constants, 
in the special case when $\mu$ is the isotropic
Gaussian distribution in $\R^n$, $d=2$, and $f$ is
non-negative. Since every degree two non-negative polynomial
can be written as a simple quadratic form, the proof
only contains rudimentary calculations.
The explicit result we will show is stated below:
\begin{mythm}
\label{stmt:special_case}
Let $f : \R^n \rightarrow \R$ be any non-negative degree two polynomial. For every $\varepsilon > 0$:
\begin{align*}
    \Pr_{x \sim N(0, I_n)}\left\{ f(x) \leq \varepsilon \cdot \E_{x \sim N(0, I_n)}[f(x)] \right\} \leq (2e \cdot \varepsilon)^{1/2}.
\end{align*}
\end{mythm}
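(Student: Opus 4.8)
The plan is to put $f$ into diagonal quadratic normal form and then run a Chernoff (Laplace-transform) bound on its lower tail, with the exponential parameter chosen by hand.

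\emph{Normal form.} First I would note that any non-negative degree two polynomial can be written, after an orthogonal change of variables, as $f(x) = \sum_{i=1}^{n}\lambda_i(x_i-\mu_i)^2 + c$ with all $\lambda_i \geq 0$ and $c \geq 0$. Writing $f(x) = x^{\T}Ax + b^{\T}x + c_0$ with $A = A^{\T}$, non-negativity forces $A \succeq 0$ (look at $f(sv)/s^2$ as $s\to\infty$) and $b \in \image A$ (otherwise $f$ tends to $-\infty$ along a direction in $\ker A$), so completing the square gives $f(x) = (x-x_0)^{\T}A(x-x_0) + c$ with $c = f(x_0) \geq 0$; diagonalizing $A = U\Lambda U^{\T}$ and using that $U^{\T}x \sim N(0,I_n)$ when $x \sim N(0,I_n)$ puts $f$ in the claimed form. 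I may also assume $\varepsilon < 1/(2e)$ (otherwise the right-hand side is at least $1$) and $L := \sum_i \lambda_i > 0$ (otherwise $f$ is constant and the claim is trivial).

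\emph{Laplace transform and a Gaussian integral.} For any $t > 0$, Markov's inequality applied to $e^{-tf(x)}$ yields $\Pr\{f(x) \leq \varepsilon M\} \leq e^{t\varepsilon M}\,\E[e^{-tf(x)}]$, where $M := \E[f(x)] = \sum_i \lambda_i(1+\mu_i^2) + c$. By independence of the coordinates, $\E[e^{-tf(x)}] = e^{-tc}\prod_{i}\E[e^{-t\lambda_i(x_i-\mu_i)^2}]$, and a single completion of the square gives, for $X \sim N(0,1)$ and $s \geq 0$,
\[
    \E\big[e^{-s(X-\mu)^2}\big] = \frac{1}{\sqrt{1+2s}}\,\exp\!\Big(-\frac{s\mu^2}{1+2s}\Big).
\]
Taking logarithms, $\log\Pr\{f \leq \varepsilon M\}$ is at most $tc(\varepsilon-1) + \sum_{i}\big(t\varepsilon\lambda_i(1+\mu_i^2) - \tfrac12\log(1+2t\lambda_i) - \tfrac{t\lambda_i\mu_i^2}{1+2t\lambda_i}\big)$.

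\emph{Choosing $t$.} The shift $c$ only helps: $tc(\varepsilon-1)\leq 0$. Writing $a_i := t\lambda_i$, the $\mu_i^2$-contribution from coordinate $i$ is $a_i\mu_i^2\big(\varepsilon - \tfrac{1}{1+2a_i}\big)$, which is $\leq 0$ once $1+2a_i \leq 1/\varepsilon$. The key choice is $t := \frac{1-\varepsilon}{2\varepsilon L}$, so that $2t\lambda_i \leq 2tL = \frac{1-\varepsilon}{\varepsilon}$, hence $1+2a_i \leq 1/\varepsilon$ for \emph{every} $i$ (using $\lambda_i \leq L$) and all the $\mu_i^2$-terms vanish simultaneously. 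What survives is $\sum_i\big(\varepsilon a_i - \tfrac12\log(1+2a_i)\big) = \varepsilon tL - \tfrac12\sum_i\log(1+2a_i)$, and the elementary bound $\prod_i(1+2a_i) \geq 1 + 2\sum_i a_i = 1 + 2tL = 1/\varepsilon$ gives $\sum_i\log(1+2a_i) \geq \log(1/\varepsilon)$. Since $\varepsilon tL = \tfrac{1-\varepsilon}{2}$, this yields $\log\Pr\{f\leq\varepsilon M\} \leq \tfrac{1-\varepsilon}{2} + \tfrac12\log\varepsilon \leq \tfrac12\log(e\varepsilon)$, i.e.\ $\Pr\{f \leq \varepsilon M\} \leq \sqrt{e\varepsilon} \leq \sqrt{2e\varepsilon}$.

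\emph{Main obstacle.} The only delicate point is the linear part of $f$ (the shifts $\mu_i$). A crude bound such as $\E[e^{-s(X-\mu)^2}] \leq (1+2s)^{-1/2}$ discards the $e^{-s\mu^2/(1+2s)}$ factor, after which $\E[f]$ can be arbitrarily large relative to $\sum_i\lambda_i$ and the argument collapses. Retaining that factor and choosing $t$ so that it exactly cancels the $\mu_i^2$ part of $\varepsilon\E[f]$ is what makes the estimate close; the rest is bookkeeping plus the one-line inequality $\prod_i(1+2a_i)\geq 1+2\sum_i a_i$.
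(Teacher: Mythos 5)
Your argument is correct, and it in fact closes with the sharper constant $\sqrt{e\varepsilon}$ rather than $\sqrt{2e\varepsilon}$: the normal form reduction, the exact one-dimensional identity $\E\,e^{-s(X-\mu)^2}=(1+2s)^{-1/2}e^{-s\mu^2/(1+2s)}$, the sign analysis of the $\mu_i^2$-terms under the choice $t=(1-\varepsilon)/(2\varepsilon L)$, and the product bound $\prod_i(1+2a_i)\ge 1+2\sum_i a_i$ all check out. The route is genuinely different from the paper's. The paper's MGF bound (\Cref{stmt:quadratic_gaussian_mgf}) deliberately discards the noncentrality factor---precisely the factor you single out in your ``main obstacle'' remark---so its Chernoff step (\Cref{stmt:small_ball_gaussian_quadratic_explicit}) only controls the event $\{f\le\varepsilon\Tr(Q_{22})\}$ rather than $\{f\le\varepsilon\,\E f\}$. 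To repair this, the paper splits into the cases $q_{11}\le\Tr(Q_{22})$ and $q_{11}>\Tr(Q_{22})$: the first is handled by that Chernoff bound at level $2\varepsilon$, while the second lower-bounds the quadratic form by the square of a scalar noncentered Gaussian via a Schur complement (with a limiting argument in a perturbation parameter $\gamma$) and invokes a separate scalar small-ball estimate (\Cref{stmt:mean_anticoncentration}) proved by error-function calculus; the case split is what costs the extra factor $2$ inside the square root. Your proof retains the noncentrality factor and picks the Chernoff parameter so that every coordinate's mean-shift contribution is nonpositive simultaneously, which eliminates the case split, \Cref{stmt:mean_anticoncentration}, and the limiting argument, at the mild price of first diagonalizing (the paper works directly with the $(n+1)\times(n+1)$ PSD matrix and needs no change of variables). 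One cosmetic remark: the identically-zero polynomial, for which $\E f=0$, is a degenerate case of the statement itself and is implicitly excluded both in your write-up and in the paper's.
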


\section{Proof of \Cref{stmt:special_case}}
In general, a non-negative polynomial 
does not necessarily have a sum-of-squares decomposition.
However, in the degree $d=2$ case, 
they are equivalent representations:
every non-negative degree-two polynomial $f(x)$ can be written as
the following quadratic form:
\begin{align*}
    f(x) = \cvectwo{1}{x}^\T \bmattwo{q_{11}}{q_{12}^\T}{q_{12}}{Q_{22}} \cvectwo{1}{x}, \quad \bmattwo{q_{11}}{q_{12}^\T}{q_{12}}{Q_{22}} \in \calS^{n+1}_{\geq 0}.
\end{align*}
Here, $\calS^n_{\geq 0}$ (resp.\ $\calS^{n}_{> 0}$)
denote the cone of real-valued
$n \times n$ symmetric positive semidefinite (resp.\ positive definite)
matrices.
With this notation, we have that
$\E_{x \sim N(0, I_n)}[f(x)] = q_{11} + \Tr(Q_{22})$.
Thus, \Cref{stmt:special_case} is equivalent to showing
for all $Q \in \calS^{n+1}_{\geq 0}$:
\begin{align}
    \Pr\left\{ \cvectwo{1}{x}^\T Q \cvectwo{1}{x} \leq \varepsilon \cdot \Tr(Q) \right\} \leq (2e \cdot \varepsilon)^{1/2} \quad \forall \varepsilon > 0. \label{eq:equivalent_to_show}
\end{align}

The first step towards showing \eqref{eq:equivalent_to_show} is the following
upper bound on the moment generating function of the quadratic form:
\begin{mylemma}
\label{stmt:quadratic_gaussian_mgf}
Let $x \sim N(0, I_n)$ 
and let $Q = \bmattwo{q_{11}}{q_{12}^\T}{q_{12}}{Q_{22}} \in \calS^{n+1}_{\geq 0}$.
For any $\lambda > 0$, we have:
\begin{align*}
    \E \exp\left(-\lambda \cvectwo{1}{x}^\T  \bmattwo{q_{11}}{q_{12}^\T}{q_{12}}{Q_{22}} \cvectwo{1}{x}\right) \leq \det(I_n + 2 \lambda Q_{22})^{-1/2}.
\end{align*}
\end{mylemma}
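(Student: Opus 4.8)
The plan is to evaluate the Gaussian integral on the left-hand side in closed form and then use the positive semidefiniteness of $Q$ to discard the resulting error term. Expanding the quadratic form, $\cvectwo{1}{x}^\T Q \cvectwo{1}{x} = q_{11} + 2 q_{12}^\T x + x^\T Q_{22} x$, so the expectation equals $e^{-\lambda q_{11}}$ times the integral of $\exp(-\lambda x^\T Q_{22} x - 2\lambda q_{12}^\T x)$ against the $N(0, I_n)$ density. Since $Q_{22} \succeq 0$ and $\lambda > 0$, the matrix $M := I_n + 2\lambda Q_{22}$ is positive definite, and I would complete the square in the combined exponent, $-\tfrac12 x^\T M x - 2\lambda q_{12}^\T x = -\tfrac12 (x + 2\lambda M^{-1} q_{12})^\T M (x + 2\lambda M^{-1} q_{12}) + 2\lambda^2 q_{12}^\T M^{-1} q_{12}$, and then perform the standard shifted-Gaussian integration $(2\pi)^{-n/2}\int \exp(-\tfrac12(x-a)^\T M (x-a))\,\rmd x = \det(M)^{-1/2}$. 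This gives the exact identity
\begin{align*}
\E \exp\left(-\lambda \cvectwo{1}{x}^\T Q \cvectwo{1}{x}\right) = \det(I_n + 2\lambda Q_{22})^{-1/2} \exp\left(-\lambda q_{11} + 2\lambda^2 q_{12}^\T (I_n + 2\lambda Q_{22})^{-1} q_{12}\right).
\end{align*}

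It then remains to show that the exponential factor is at most $1$, i.e., that $q_{11} \geq 2\lambda q_{12}^\T (I_n + 2\lambda Q_{22})^{-1} q_{12}$; this is where positive semidefiniteness of $Q$ is used. I would substitute the specific vector $v = -2\lambda (I_n + 2\lambda Q_{22})^{-1} q_{12}$ into the inequality $0 \leq \cvectwo{1}{v}^\T Q \cvectwo{1}{v} = q_{11} + 2 q_{12}^\T v + v^\T Q_{22} v$. Using the identity $M^{-1} Q_{22} = \tfrac{1}{2\lambda}(I_n - M^{-1})$ (which holds because $Q_{22}$ and $M$ commute), the cross term and the quadratic term combine so that this inequality rearranges to
\begin{align*}
q_{11} - 2\lambda q_{12}^\T (I_n + 2\lambda Q_{22})^{-1} q_{12} \geq 2\lambda q_{12}^\T (I_n + 2\lambda Q_{22})^{-2} q_{12} \geq 0,
\end{align*}
which is exactly the desired bound. (Alternatively, one may invoke the Schur complement characterization of $\calS^{n+1}_{\geq 0}$ to obtain $q_{11} \geq q_{12}^\T Q_{22}^{+} q_{12}$ and then compare $2\lambda(I_n + 2\lambda Q_{22})^{-1} \preceq Q_{22}^{+}$ on the range of $Q_{22}$.)

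I expect the only mildly delicate point to be handling a possibly singular $Q_{22}$: the Schur-complement route requires pseudoinverses and a small case analysis on $\ker(Q_{22})$, which is why the direct substitution above is preferable — it only ever inverts the positive definite matrix $I_n + 2\lambda Q_{22}$. The remaining ingredients, namely the completion of the square and the shifted Gaussian moment computation, are entirely routine.
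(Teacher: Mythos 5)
Your proposal is correct and takes essentially the same route as the paper: evaluate the Gaussian expectation exactly by completing the square, obtaining $\det(I_n+2\lambda Q_{22})^{-1/2}$ times an exponential factor, and then show that factor is at most $1$ using $Q \succeq 0$ (your closed form matches the paper's, since $q_{12}^\T (Q_{22}+(2\lambda)^{-1}I_n)^{-1}q_{12} = 2\lambda\, q_{12}^\T (I_n+2\lambda Q_{22})^{-1}q_{12}$). The only cosmetic difference is the final step, where the paper invokes nonnegativity of the Schur complement of $Q + \bmattwo{0}{0}{0}{(2\lambda)^{-1}I_n}$, while you verify the same inequality by substituting $v=-2\lambda(I_n+2\lambda Q_{22})^{-1}q_{12}$ into $\cvectwo{1}{v}^\T Q \cvectwo{1}{v}\ge 0$; this is the same fact in disguise, as your $v$ is exactly the minimizer underlying that Schur-complement bound, and it has the mild advantage of never needing a pseudoinverse or a separate singular-$Q_{22}$ case.
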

\begin{proof}
Define $\mu := -(Q_{22} + (2\lambda)^{-1} I_n)^{-1/2} q_{12}$. We have:
\begin{align*}
   &\E \exp\left(-\lambda \cvectwo{1}{x}^\T  \bmattwo{q_{11}}{q_{12}^\T}{q_{12}}{Q_{22}} \cvectwo{1}{x}\right) \\
   &= (2\pi)^{-n/2} \int \exp\left\{ -\lambda \left[ q_{11} + 2 q_{12}^\T x  +  x^\T [Q_{22} + (2\lambda)^{-1} I_n] x  + \mu^\T \mu - \mu^\T \mu \right] \right\} \rmd x \\ 
   &= (2\pi)^{-n/2} \int \exp\left\{ -\lambda \left[ q_{11} -\norm{\mu}_2^2 + \norm{(Q_{22} + (2\lambda)^{-1} I_n)^{1/2} x - \mu}_2^2 \right] \right\} \rmd x \\
   &= (2\pi)^{-n/2} \exp\left\{ -\lambda \left[ q_{11} - q_{12}^\T (Q_{22} + (2\lambda)^{-1} I_n)^{-1} q_{12} \right] \right\} \\
   &\qquad \times \int \exp\left\{ -\lambda \norm{(Q_{22} + (2\lambda)^{-1} I_n)^{1/2} x - \mu}_2^2  \right\} \rmd x \\
   &= \det(I_n + 2\lambda Q_{22})^{-1/2}  \exp\left\{ -\lambda \left[ q_{11} - q_{12}^\T (Q_{22} + (2\lambda)^{-1} I_n)^{-1} q_{12} \right] \right\}.
\end{align*}
Next, observe that the quantity
$ q_{11} - q_{12}^\T (Q_{22} + (2\lambda)^{-1} I_n)^{-1} q_{12}$ is non-negative, since it 
is the Schur complement of the positive semidefinite matrix:
\begin{align*}
    \bmattwo{q_{11}}{q_{12}^\T}{q_{12}}{Q_{22}} + \bmattwo{0}{0}{0}{(2\lambda)^{-1} I_n}
\end{align*}
Therefore:
\begin{align*}
     \exp\left\{ -\lambda  [q_{11} - q_{12}^\T (Q_{22} + (2\lambda)^{-1} I_n)^{-1} q_{12} ]\right\} \leq 1.
\end{align*}
The claim now follows.
\end{proof}

The moment generating function bound from 
\Cref{stmt:quadratic_gaussian_mgf} is sufficient to obtain
the following weaker form of \eqref{eq:equivalent_to_show}
via Chernoff's inequality:
\begin{mylemma}
\label{stmt:small_ball_gaussian_quadratic_explicit}
Let $x \sim N(0, I_n)$ 
and let $Q = \bmattwo{q_{11}}{q_{12}^\T}{q_{12}}{Q_{22}} \in \calS^{n+1}_{\geq 0}$.
For any $\varepsilon > 0$,
\begin{align*}
    \Pr\left\{  \cvectwo{1}{x}^\T  \bmattwo{q_{11}}{q_{12}^\T}{q_{12}}{Q_{22}} \cvectwo{1}{x} \leq \varepsilon \cdot \Tr(Q_{22}) \right\} \leq (e \cdot \varepsilon)^{1/2}.
\end{align*}
\end{mylemma}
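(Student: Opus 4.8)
The plan is to run Chernoff's method against the moment generating function bound of \Cref{stmt:quadratic_gaussian_mgf} and then optimize over the free parameter. Write $S := \cvectwo{1}{x}^\T Q \cvectwo{1}{x}$ and $\tau := \Tr(Q_{22})$. For any $\lambda > 0$, Markov's inequality applied to the nonnegative random variable $e^{-\lambda S}$ gives
\begin{align*}
    \Pr\{ S \leq \varepsilon \tau \} = \Pr\{ e^{-\lambda S} \geq e^{-\lambda \varepsilon \tau} \} \leq e^{\lambda \varepsilon \tau}\, \E e^{-\lambda S} \leq e^{\lambda \varepsilon \tau}\, \det(I_n + 2\lambda Q_{22})^{-1/2},
\end{align*}
where the last inequality is exactly \Cref{stmt:quadratic_gaussian_mgf}.

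Next I would lower bound the determinant in terms of $\tau$. Letting $\sigma_1, \dots, \sigma_n \geq 0$ be the eigenvalues of $Q_{22}$, we have $\det(I_n + 2\lambda Q_{22}) = \prod_{i=1}^n (1 + 2\lambda \sigma_i)$; since every factor is at least one, expanding the product and discarding the (nonnegative) higher-order terms yields $\prod_{i=1}^n (1 + 2\lambda \sigma_i) \geq 1 + 2\lambda \sum_{i=1}^n \sigma_i = 1 + 2\lambda \tau$. Substituting this in,
\begin{align*}
    \Pr\{ S \leq \varepsilon \tau \} \leq \frac{e^{\lambda \varepsilon \tau}}{\sqrt{1 + 2\lambda \tau}} \quad \text{for all } \lambda > 0.
\end{align*}

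The remaining step, and the only genuine computation, is to choose $\lambda$ optimally. Assume $\tau > 0$ (if $\tau = 0$ then $Q_{22} = 0$, which forces $q_{12} = 0$ by positive semidefiniteness, so $S = q_{11} \geq 0$ is deterministic and the claim is immediate apart from the trivial degenerate case $Q = 0$) and assume $\varepsilon < 1$ (for $\varepsilon \geq 1$ the right-hand side $(e\varepsilon)^{1/2}$ exceeds $1$, so there is nothing to prove). Reparametrizing by $u := \lambda \tau > 0$, the bound reads $e^{u\varepsilon}(1 + 2u)^{-1/2}$; minimizing $u\varepsilon - \tfrac12 \ln(1 + 2u)$ over $u > 0$ gives the stationary point $u = (1-\varepsilon)/(2\varepsilon)$, at which $1 + 2u = 1/\varepsilon$, and the bound becomes $\sqrt{\varepsilon}\, e^{(1-\varepsilon)/2} \leq \sqrt{\varepsilon}\, e^{1/2} = (e\varepsilon)^{1/2}$.

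I do not expect a real obstacle: \Cref{stmt:quadratic_gaussian_mgf} already carries the analytic weight, the determinant estimate is elementary, and the scalar optimization is routine. The only place that needs a little care is the bookkeeping of the boundary cases $\tau = 0$ and $\varepsilon \geq 1$, so that the clean inequality holds uniformly over all $Q \in \calS^{n+1}_{\geq 0}$ and all $\varepsilon > 0$.
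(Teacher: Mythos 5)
Your proposal is correct and follows essentially the same route as the paper: Chernoff's method with the MGF bound of \Cref{stmt:quadratic_gaussian_mgf}, the eigenvalue expansion $\det(I_n+2\lambda Q_{22}) \geq 1+2\lambda\Tr(Q_{22})$, and the same optimal choice $\lambda = (1-\varepsilon)/(2\varepsilon\Tr(Q_{22}))$ giving $(e^{1-\varepsilon}\varepsilon)^{1/2} \leq (e\varepsilon)^{1/2}$. Your explicit treatment of the boundary cases $\Tr(Q_{22})=0$ and $\varepsilon \geq 1$ is a small bookkeeping refinement the paper leaves implicit.
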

\begin{proof}
Let $\{\lambda_i\}_{i=1}^{n}$ denote the eigenvalues of $Q_{22}$.
We observe that for any $\eta > 0$, since all the eigenvalues of $Q_{22}$ are
non-negative:
\begin{align*}
    \det(I_n + 2\eta Q_{22}) = \prod_{i=1}^{n} (1 + 2 \eta \lambda_i) \geq 1 + 2 \eta \sum_{i=1}^{n} \lambda_i = 1 + 2 \eta \Tr(Q_{22}).
\end{align*}
Now by a Chernoff bound:
\begin{align*}
     &\Pr\left\{  \cvectwo{1}{x}^\T  \bmattwo{q_{11}}{q_{12}^\T}{q_{12}}{Q_{22}} \cvectwo{1}{x} \leq \varepsilon \cdot \Tr(Q_{22}) \right\} \\
     &\leq \inf_{\eta > 0} \exp\{\eta \varepsilon \Tr(Q_{22})\}  \E \exp\left(-\eta \cvectwo{1}{x}^\T  \bmattwo{q_{11}}{q_{12}^\T}{q_{12}}{Q_{22}} \cvectwo{1}{x}\right) \\
     &\leq \inf_{\eta > 0}  \exp\{\eta \varepsilon \Tr(Q_{22})\} \det(I_n + 2\eta Q_{22})^{-1/2} &&\text{using \Cref{stmt:quadratic_gaussian_mgf}} \\
     &\leq \inf_{\eta > 0} \exp\{\eta \varepsilon \Tr(Q_{22})\} (1 + 2 \eta \Tr(Q_{22}))^{-1/2} \\
     &\leq (e^{1-\varepsilon} \cdot \varepsilon)^{1/2} &&\text{setting } \eta = (1-\varepsilon)/(2 \varepsilon \Tr(Q_{22})) \\
     &\leq (e \cdot \varepsilon)^{1/2}.
\end{align*}
\end{proof}

At this point, to complete the proof of \Cref{stmt:special_case},
we need the following small-ball probability estimate for a non-centered
Gaussian random variable:
\begin{mylemma}
\label{stmt:mean_anticoncentration}
Let $x \sim N(\mu, \sigma^2)$ with $\mu \in \R$ and $\sigma > 0$.
For any $\varepsilon \in (0,1)$, we have:
\begin{align*}
    \Pr\{ \abs{x} \leq \varepsilon \abs{\mu} \} \leq \varepsilon/2.
\end{align*}
\end{mylemma}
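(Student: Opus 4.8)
The plan is to reduce to a one-parameter optimization over the inverse noise level, locate the maximizer via a first-order condition, and close with a single scalar inequality.

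First I would normalize. If $\mu = 0$ the left-hand side is $\Pr\{x = 0\} = 0$, so assume $\mu \neq 0$; replacing $x$ by $-x$ we may take $\mu > 0$, and replacing $x$ by $x/\sigma$ (which leaves $|x|/|\mu|$ unchanged) we may take $\sigma = 1$. Let $\Phi$ be the standard normal distribution function and $\phi = \Phi'$ its density. Since $\{\,|x| \le \varepsilon\mu\,\} = \{\,-(1+\varepsilon)\mu \le x - \mu \le -(1-\varepsilon)\mu\,\}$, symmetry of $\phi$ gives
\[
\Pr\{\,|x| \le \varepsilon\mu\,\} = \Phi\!\big((1+\varepsilon)\mu\big) - \Phi\!\big((1-\varepsilon)\mu\big) = \int_{(1-\varepsilon)\mu}^{(1+\varepsilon)\mu}\phi(u)\,\rmd u ,
\]
so it suffices to prove $P(t) := \int_{(1-\varepsilon)t}^{(1+\varepsilon)t}\phi(u)\,\rmd u \le \varepsilon/2$ for every $t > 0$.

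Next I would locate the maximum of $P$. Since $P(0^{+}) = 0$, $P(t) \to 0$ as $t \to \infty$, and $P > 0$ on $(0,\infty)$, the global maximum of $P$ is attained at a stationary point $t^{\star}$, where $(1+\varepsilon)\phi\big((1+\varepsilon)t^{\star}\big) = (1-\varepsilon)\phi\big((1-\varepsilon)t^{\star}\big)$. Writing $a := (1-\varepsilon)t^{\star}$ and $b := (1+\varepsilon)t^{\star}$, we have $a + b = 2t^{\star}$, $b - a = 2\varepsilon t^{\star}$, hence $\varepsilon/2 = (b-a)/\big(2(a+b)\big)$, and multiplying the stationarity identity by $t^{\star}$ turns it into $a\phi(a) = b\phi(b)$. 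Because $u \mapsto u\phi(u)$ is strictly increasing on $[0,1]$ and strictly decreasing on $[1,\infty)$, this forces $0 < a < 1 < b$, and taking logarithms yields $b^{2} - a^{2} = 2\ln(b/a)$. At this point the claim reduces to showing
\[
\int_{a}^{b}\phi(u)\,\rmd u \le \frac{b-a}{2(a+b)} \qquad \text{whenever } 0 < a < 1 < b \text{ and } b^{2} - a^{2} = 2\ln(b/a) .
\]

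To prove this scalar inequality I would exploit the constraint, which says exactly that $\int_{a}^{b}u\,\rmd u = \int_{a}^{b}\rmd u/u$ (both equal $\ln(b/a)$); combining it with $b - a = (b^{2}-a^{2})/(a+b) = 2\ln(b/a)/(a+b)$, the target becomes equivalent to $(a+b)^{2}\int_{a}^{b}\phi(u)\,\rmd u \le \int_{a}^{b}\rmd u/u$. A sufficient pointwise condition is $u\phi(u) \le (a+b)^{-2}$ on $[a,b]$, which holds for all $u$ as soon as $a + b \le (2\pi e)^{1/4}$, since $\max_{u\ge 0}u\phi(u) = (2\pi e)^{-1/2}$. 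For the complementary range I would split at $u = 1$ (the peak of $u\phi(u)$), bounding $\int_{a}^{1}\phi$ by the Hermite--Hadamard estimate $(1-a)\,\phi\big(\tfrac{a+1}{2}\big)$ (valid because $\phi$ is concave on $[0,1]$) and $\int_{1}^{b}\phi$ by sharp two-sided Gaussian tail bounds; and when $a$ itself is tiny (that is, $\varepsilon$ close to $1$) I would instead use $\int_{a}^{b}\phi \le 1 - \Phi(a) \le \tfrac12 - a\phi(a)$ and note that the constraint then makes $b$ --- hence $(a+b)\phi(a)$ --- large enough that $(a+b)\phi(a) \ge 1$. Each case becomes a one-variable inequality once $b$ is eliminated via the constraint. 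I expect this final verification to be the one real obstacle: the constant $\varepsilon/2$ is essentially sharp, with $\sup_{t}P(t) \to 1/2$ as $\varepsilon \uparrow 1$ and $\sup_{t}P(t) = (2/\sqrt{2\pi e} + o(1))\,\varepsilon$ as $\varepsilon \downarrow 0$, so no step may lose a constant factor; in particular the naive density-times-length estimate and the naive tail-probability estimate each fail by themselves in a middle range of $\varepsilon$, which is what forces the split and what makes it necessary to carry constants such as $\max_u u\phi(u) = (2\pi e)^{-1/2}$ and $\max_u u^{2}e^{-u^{2}/2} = 2/e$ through without any slack.
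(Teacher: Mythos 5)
Your reduction is sound and, up to locating the stationary point, it coincides with the paper's argument: the paper also normalizes, differentiates in the mean parameter, and finds the unique critical point $r_\star = \sigma\sqrt{\log\left(\frac{1+\varepsilon}{1-\varepsilon}\right)/(2\varepsilon)}$, which in your variables is exactly the constraint $b^2-a^2=2\ln(b/a)$ with $a=(1-\varepsilon)t^\star$, $b=(1+\varepsilon)t^\star$. The divergence --- and the problem --- is what happens next. The paper substitutes the critical point to get an explicit one-variable function $\zeta(\varepsilon)$ and finishes with a single global argument: $\zeta$ is convex on $(0,1)$ with $\zeta(0^+)=0$ and $\zeta(1^-)=1/2$, hence $\zeta(\varepsilon)\le\varepsilon/2$. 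You instead leave the crux as the constrained inequality $\int_a^b\phi(u)\,\rmd u\le\frac{b-a}{2(a+b)}$ and only sketch how you would attack it: a pointwise bound when $a+b\le(2\pi e)^{1/4}$, a Hermite--Hadamard-plus-tail-bound estimate in a middle range, and a separate argument when $a$ is tiny. None of these cases is carried out, and you yourself flag this verification as ``the one real obstacle''; as written, the proposal is a plan whose hardest step is missing, not a proof.

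The gap is not cosmetic, because (as you correctly observe) there is almost no slack anywhere. Your first case only covers $a+b\le(2\pi e)^{1/4}\approx 2.03$, i.e.\ a neighborhood of $\varepsilon=0$, and your tiny-$a$ case needs $(a+b)\phi(a)\ge 1$, which already fails around $a\approx 0.3$: there the constraint gives $b\approx 1.96$, so $(a+b)\phi(a)\approx 2.26\cdot 0.38<1$. So the middle case must carry points such as $a=0.3$, $b\approx 1.96$, where $\int_a^b\phi\approx 0.357$ against a target of $\approx 0.367$ --- under $3\%$ of room --- and it is not at all evident that Hermite--Hadamard on $[a,1]$ together with two-sided tail bounds on $[1,b]$ closes at that precision. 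Either carry out these estimates explicitly with the constraint eliminated, or adopt the paper's device: write the maximum in closed form as $\zeta(\varepsilon)$ (a function of $\varepsilon$ alone, since $\sigma$ cancels) and prove $\zeta(\varepsilon)\le\varepsilon/2$ via convexity of $\zeta$ plus the endpoint limits $0$ and $1/2$, which avoids any case analysis.
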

\begin{proof}
We have:
\begin{align*}
    \Pr\{\abs{x} \leq \varepsilon\abs{\mu}\} &= \Pr_{g\sim N(0, 1)}\{ \abs{\mu + \sigma g} \leq \varepsilon \abs{\mu}\} \\
    &= \frac{1}{\sqrt{2\pi}} \int_{-(1+\varepsilon)\abs{\mu}/\sigma}^{-(1-\varepsilon)\abs{\mu}/\sigma} \exp(-x^2/2) \,\rmd x \\
    &\leq \sup_{\sigma > 0} \sup_{r > 0}  h(r,\sigma;\varepsilon) := \frac{1}{\sqrt{2\pi}}\int_{-(1+\varepsilon)r/\sigma}^{-(1-\varepsilon)r/\sigma} \exp(-x^2/2) \,\rmd x .
\end{align*}
Letting $\erf{x} = \frac{2}{\sqrt{\pi}} \int_0^x e^{-t^2} \,\rmd t$
denote the Gaussian error function, we have:
\begin{align*}
    h(r,\sigma;\varepsilon) = \frac{1}{2}\left[ \erf\left( \frac{-(1-\varepsilon)r}{\sqrt{2}\sigma}\right) + \erf\left( \frac{(1+\varepsilon)r}{\sqrt{2}\sigma}\right) \right].
\end{align*}
Computing $\frac{\partial h(r,\sigma;\varepsilon)}{\partial r} = 0$
yields the unique real-valued root:
\begin{align*}
    r_\star = \sigma \sqrt{ \frac{\log\left(\frac{1+\varepsilon}{1-\varepsilon}\right)}{2\varepsilon}}.
\end{align*}
Evaluating $\frac{\partial^2 h(r,\sigma;\varepsilon)}{\partial^2 r}\big|_{r=r_\star}$ yields:
\begin{align*}
    \frac{\partial^2 h(r,\sigma;\varepsilon)}{\partial^2 r}\bigg|_{r=r_\star} = -\frac{2 (1-\varepsilon) \varepsilon \left(\frac{1+\varepsilon}{1-\varepsilon}\right)^{-\frac{(1-\varepsilon)^2}{4 \varepsilon}} \sqrt{\frac{\log
   \left(\frac{1+\varepsilon}{1-\varepsilon}\right)}{\varepsilon}}}{\sqrt{\pi } \sigma^2} < 0,
\end{align*}
and hence for fixed $\sigma, \varepsilon$, the quantity $r_\star$ maximizes
$r \mapsto h(r,\sigma;\varepsilon)$ by the second derivative test:
\begin{align*}
    \sup_{\sigma > 0} \sup_{r > 0} h(r,\sigma;\varepsilon) &= \sup_{\sigma > 0} \frac{1}{2}\left[ \erf\left(\frac{-(1-\varepsilon)}{\sqrt{2}} \sqrt{ \frac{\log\left(\frac{1+\varepsilon}{1-\varepsilon}\right)}{2\varepsilon}} \right) + \erf\left(\frac{1+\varepsilon}{\sqrt{2}} \sqrt{ \frac{\log\left(\frac{1+\varepsilon}{1-\varepsilon}\right)}{2\varepsilon}} \right) \right] \\
    &= \frac{1}{2}\left[ \erf\left(\frac{-(1-\varepsilon)}{\sqrt{2}} \sqrt{ \frac{\log\left(\frac{1+\varepsilon}{1-\varepsilon}\right)}{2\varepsilon}} \right) + \erf\left(\frac{1+\varepsilon}{\sqrt{2}} \sqrt{ \frac{\log\left(\frac{1+\varepsilon}{1-\varepsilon}\right)}{2\varepsilon}} \right) \right] \\
    &=: \zeta(\varepsilon).
\end{align*}
One can check that $\lim_{\varepsilon \rightarrow 0^{+}} \zeta(\varepsilon) = 0$
and $\lim_{\varepsilon \rightarrow 1^{-}} \zeta(\varepsilon) = 1/2$.
Furthermore:
\begin{align*}
    \zeta''(\varepsilon) = \frac{\left(\frac{1+\varepsilon}{1-\varepsilon}\right)^{-\frac{(1-\varepsilon)^2}{4 \varepsilon}} \left(\left(\varepsilon^2-1\right) \log
   \left(\frac{1+\varepsilon}{1-\varepsilon}\right)+2 \varepsilon\right)^2}{4 \sqrt{\pi } (1-\varepsilon) \varepsilon^3 (1+\varepsilon)^2
   \sqrt{\frac{\log \left(\frac{1+\varepsilon}{1-\varepsilon}\right)}{\varepsilon}}} > 0.
\end{align*}
Hence, the function $\zeta(\varepsilon)$ is also convex
on $(0, 1)$. Therefore, $\zeta(\varepsilon) \leq \varepsilon / 2$, from which the claim follows.
\end{proof}

With both \Cref{stmt:quadratic_gaussian_mgf}
and \Cref{stmt:mean_anticoncentration} in hand, we are now ready
to prove \eqref{eq:equivalent_to_show}, from which \Cref{stmt:special_case}
immediately follows.
\begin{mylemma}
Let $x \sim N(0, I_n)$ 
and let $Q = \bmattwo{q_{11}}{q_{12}^\T}{q_{12}}{Q_{22}} \in \calS^{n+1}_{\geq 0}$. For any $\varepsilon > 0$,
\begin{align*}
    \Pr\left\{  \cvectwo{1}{x}^\T  \bmattwo{q_{11}}{q_{12}^\T}{q_{12}}{Q_{22}} \cvectwo{1}{x} \leq \varepsilon \cdot (q_{11} + \Tr(Q_{22})) \right\} \leq (2e \cdot \varepsilon)^{1/2}.
\end{align*}
\end{mylemma}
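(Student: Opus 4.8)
The plan is to combine the two lemmas already in hand — \Cref{stmt:quadratic_gaussian_mgf} (used through the already-derived \Cref{stmt:small_ball_gaussian_quadratic_explicit}) and \Cref{stmt:mean_anticoncentration} — by comparing the mean $\E[f(x)] = q_{11} + \Tr(Q_{22})$ against each of its two summands, where I abbreviate $f(x) := \cvectwo{1}{x}^\T Q \cvectwo{1}{x}$. First I would dispose of the trivial range: if $\varepsilon \geq 1/(2e)$ the right-hand side $(2e\varepsilon)^{1/2}$ is at least $1$, so I assume $\varepsilon < 1/(2e)$; I also assume $Q \neq 0$, i.e.\ $f \not\equiv 0$. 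Since $q_{11} + \Tr(Q_{22}) \leq 2\max\{q_{11}, \Tr(Q_{22})\}$, it suffices to bound $\Pr\{ f(x) \leq 2\varepsilon \max\{q_{11}, \Tr(Q_{22})\} \}$, which I would handle by a deterministic case split on which summand is larger.

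In the case $\Tr(Q_{22}) \geq q_{11}$, note that $\Tr(Q_{22}) > 0$ — otherwise $Q_{22} = 0$, which by positive semidefiniteness of $Q$ forces $q_{12} = 0$ and $q_{11} = 0$, contradicting $Q \neq 0$ — and I would simply invoke \Cref{stmt:small_ball_gaussian_quadratic_explicit} with $2\varepsilon$ in place of $\varepsilon$, giving $\Pr\{f(x) \leq 2\varepsilon\Tr(Q_{22})\} \leq (e \cdot 2\varepsilon)^{1/2} = (2e\varepsilon)^{1/2}$, which is the claimed bound.

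In the case $q_{11} > \Tr(Q_{22})$ (so in particular $q_{11} > 0$), the key step is a pointwise lower bound on $f$. Since $Q \succeq 0$, the $2\times 2$ matrix $[v\ w]^\T Q\, [v\ w]$ with $v = \cvectwo{1}{0}$ and $w = \cvectwo{1}{x}$ is positive semidefinite, hence has non-negative determinant; this is Cauchy--Schwarz in the $Q$-seminorm and yields
\begin{align*}
q_{11} \cdot f(x) = (v^\T Q v)(w^\T Q w) \geq (v^\T Q w)^2 = (q_{11} + q_{12}^\T x)^2,
\end{align*}
i.e.\ $f(x) \geq (q_{11} + q_{12}^\T x)^2 / q_{11}$. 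Since $q_{11} + q_{12}^\T x \sim N(q_{11}, \norm{q_{12}}_2^2)$, the event $\{f(x) \leq 2\varepsilon q_{11}\}$ lies inside $\{\, \abs{q_{11} + q_{12}^\T x} \leq \sqrt{2\varepsilon}\, q_{11} \,\}$. When $\norm{q_{12}}_2 > 0$ I would apply \Cref{stmt:mean_anticoncentration} with mean $q_{11}$, variance $\norm{q_{12}}_2^2$, and $\sqrt{2\varepsilon} \in (0,1)$ in the role of its $\varepsilon$, giving a bound of $\sqrt{2\varepsilon}/2 = \sqrt{\varepsilon/2} \leq (2e\varepsilon)^{1/2}$; when $\norm{q_{12}}_2 = 0$ the variable equals the constant $q_{11} > 0$, so the probability is $0$ since $\sqrt{2\varepsilon} < 1$. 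Together with the previous case this proves the lemma, and \Cref{stmt:special_case} follows.

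I do not expect a genuine obstacle here: once the two component lemmas are available, the only real idea is the Cauchy--Schwarz bound $f(x) \geq (q_{11} + q_{12}^\T x)^2/q_{11}$, which converts the constant-term-dominated regime into one-dimensional non-centered Gaussian anti-concentration. The remaining care points — handling the degenerate subcases $\Tr(Q_{22}) = 0$ and $\norm{q_{12}}_2 = 0$, and ensuring $\sqrt{2\varepsilon} \in (0,1)$ so that \Cref{stmt:mean_anticoncentration} is applicable — are just bookkeeping that the harmless reduction to $\varepsilon < 1/(2e)$ takes care of.
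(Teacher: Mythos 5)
Your proof is correct, and its skeleton is the paper's: reduce to $\varepsilon < 1/(2e)$, split on whether $q_{11}$ or $\Tr(Q_{22})$ dominates the mean, dispatch the first case by invoking \Cref{stmt:small_ball_gaussian_quadratic_explicit} at level $2\varepsilon$, and reduce the second case to one-dimensional non-centered Gaussian anti-concentration via \Cref{stmt:mean_anticoncentration}. The one genuine difference is how you obtain the pointwise bound $f(x) \geq (q_{11} + q_{12}^\T x)^2/q_{11}$ when $q_{11} > \Tr(Q_{22})$: you note that $q_{11} > 0$ in this case and apply Cauchy--Schwarz in the $Q$-seminorm (equivalently, nonnegativity of the Schur complement of $Q$ with respect to its $(1,1)$ entry), whereas the paper perturbs to $\bmattwo{q_{11}+\gamma}{q_{12}^\T}{q_{12}}{Q_{22}}$, completes the square with $q_{11}+\gamma$, and then removes $\gamma$ through a nested-events/continuity-of-measure limit. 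Your route eliminates that limiting argument, which is the most laborious part of the paper's proof, and you are also more careful than the paper in the subcase $q_{12}=0$, where \Cref{stmt:mean_anticoncentration} (stated with $\sigma>0$) does not literally apply and the probability is simply zero. Two small remarks: the check $\Tr(Q_{22})>0$ is not actually needed to cite \Cref{stmt:small_ball_gaussian_quadratic_explicit} as stated, and in your justification of it, $q_{11}=0$ follows from the case hypothesis $q_{11} \leq \Tr(Q_{22}) = 0$ together with $q_{11}\geq 0$, not from positive semidefiniteness alone; finally, your exclusion of $Q=0$ is sensible, since for $Q=0$ the displayed inequality genuinely fails for small $\varepsilon$ — a degenerate edge case that the paper's statement and proof silently share.
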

\begin{proof}
We can assume wlog that $\varepsilon \in (0, 1/(2e))$, otherwise
there is nothing to prove.
We first suppose that $q_{11} \leq \Tr(Q_{22})$. Then,
by \Cref{stmt:small_ball_gaussian_quadratic_explicit}:
\begin{align*}
    \Pr\left\{  \cvectwo{1}{x}^\T  \bmattwo{q_{11}}{q_{12}^\T}{q_{12}}{Q_{22}}  \cvectwo{1}{x} \leq \varepsilon \cdot (q_{11} + \Tr(Q_{22})) \right\} 
    &\leq \Pr\left\{  \cvectwo{1}{x}^\T  \bmattwo{q_{11}}{q_{12}^\T}{q_{12}}{Q_{22}}  \cvectwo{1}{x} \leq 2\varepsilon \cdot \Tr(Q_{22}) \right\} \\
    &\leq (2e \cdot \varepsilon)^{1/2}.
\end{align*}
Now we assume that $q_{11} > \Tr(Q_{22})$
for the remainder of the proof.
We have:
\begin{align*}
     \Pr\left\{  \cvectwo{1}{x}^\T  \bmattwo{q_{11}}{q_{12}^\T}{q_{12}}{Q_{22}}  \cvectwo{1}{x} \leq \varepsilon \cdot (q_{11} + \Tr(Q_{22})) \right\} 
     \leq \Pr\left\{  \cvectwo{1}{x}^\T  \bmattwo{q_{11}}{q_{12}^\T}{q_{12}}{Q_{22}}  \cvectwo{1}{x} \leq 2\varepsilon \cdot q_{11} \right\}.
\end{align*}
We proceed with a limiting argument. Fix any $\gamma > 0$.
By completing the square:
\begin{align*}
    &q_{11}+\gamma + 2 q_{12}^\T x + x^\T Q_{22} x \\
    &= ((q_{11}+\gamma)^{1/2} + (q_{11}+\gamma)^{-1/2}q_{12}^\T x)^2 + x^\T (Q_{22} - q_{12} (q_{11} + \gamma)^{-1} q_{12}^\T) x \\
    &\geq ((q_{11}+\gamma)^{1/2} + (q_{11}+\gamma)^{-1/2}q_{12}^\T x)^2,
\end{align*}
where the last inequality holds since
$Q_{22} - q_{12} (q_{11}+\gamma )^{-1} q_{12}^\T$
is positive semidefinite because it 
is the Schur complement of the positive semidefinite matrix:
\begin{align*}
    \bmattwo{q_{11}}{q_{12}^\T}{q_{12}}{Q_{22}} + \bmattwo{\gamma}{0}{0}{0}.
\end{align*}
Therefore:
\begin{align*}
    &\Pr\left\{  \cvectwo{1}{x}^\T  \bmattwo{q_{11}}{q_{12}^\T}{q_{12}}{Q_{22}}  \cvectwo{1}{x} \leq 2\varepsilon \cdot q_{11} - (1-2\varepsilon)\gamma  \right\} \\
    &= \Pr\left\{  \cvectwo{1}{x}^\T  \bmattwo{q_{11}+\gamma }{q_{12}^\T}{q_{12}}{Q_{22}} \cvectwo{1}{x} \leq 2\varepsilon \cdot (q_{11}+\gamma )  \right\} \\
    &\leq \Pr\left\{ ((q_{11}+\gamma)^{1/2} + (q_{11}+\gamma)^{-1/2}q_{12}^\T x)^2 \leq 2\varepsilon \cdot (q_{11} + \gamma) \right\} \\
    &= \Pr\left\{ \abs{ (q_{11}+\gamma)^{1/2} + (q_{11}+\gamma)^{-1/2} q_{12}^\T x}  \leq \sqrt{2\varepsilon} \cdot (q_{11}+\gamma)^{1/2} \right\} \\
    &\leq \sqrt{2\varepsilon}/2 &&\text{using \Cref{stmt:mean_anticoncentration}, since $2\varepsilon < 1$}.
\end{align*}
Let $\{\gamma_k\}_{k \geq 1}$ be any positive sequence which
is monotonically decreasing and satisfies $\lim_{k \rightarrow \infty} \gamma_k = 0$.
Define $\calE_k$ as the event:
\begin{align*}
    \calE_k := \left\{ \cvectwo{1}{x}^\T  \bmattwo{q_{11}}{q_{12}^\T}{q_{12}}{Q_{22}}  \cvectwo{1}{x} \leq 2\varepsilon \cdot q_{11} - (1-2\varepsilon)\gamma_k  \right\}.
\end{align*}
Since $\varepsilon \in (0, 1/(2e))$,
we have $1-2\varepsilon > 0$,
and hence $\calE_k \subseteq \calE_{k+1}$ for all $k \in \N_{+}$.
By continuity of measure from below:
\begin{align*}
    \Pr\left\{ \cvectwo{1}{x}^\T  \bmattwo{q_{11}}{q_{12}^\T}{q_{12}}{Q_{22}}  \cvectwo{1}{x} \leq 2\varepsilon \cdot q_{11} \right\} = \Pr\left(\bigcup_{k \geq 1} \calE_k\right) = \lim_{k \rightarrow \infty} \Pr(\calE_k) \leq \sqrt{2\varepsilon}/2.
\end{align*}
The claim now follows.
\end{proof}

\section*{Acknowledgements}
We thank Benjamin Recht for suggesting the proof strategy of
\Cref{stmt:mean_anticoncentration}.

\bibliography{paper}

\end{document}